\bmdefine{\sss}{s}
\bmdefine{\vvv}{v}
\DeclareMathAlphabet{\mathscr}{U}{rsfs}{m}{n}
\newcommand{\msPPP}{\mathscr{P}}
\newcommand{\msXXX}{\mathscr{X}}
\newcommand{\msKKK}{\mathscr{K}}
\newcommand{\NNN}{\mathbb{N}}
\newcommand{\ZZZ}{\mathbb{Z}}
\newcommand{\QQQ}{\mathbb{Q}}
\newcommand{\RRR}{\mathbb{R}}
\newcommand{\KKK}{\mathbb{K}}
\newcommand{\pppp}{\mathfrak{p}}
\newcommand{\UUUUU}{{\mathcal U}}
\newcommand{\tUUUUU}{{{t}\mathcal U}}
\newcommand{\qUUUUU}{{{q}\mathcal U}}
\newcommand{\UUUUUn}{{\mathcal U}^{(n)}}
\newcommand{\grade}{\mathrm{grade}}
\newcommand{\define}{\mathrel{:=}}
\newcommand{\gor}{Gorenstein}
\newcommand{\cm}{Cohen-Macaulay}
\newcommand{\relint}{{\rm{relint}}}
\newcommand{\trace}{{\mathrm{tr}}}
\newcommand{\Div}{{\mathrm{Div}}}
\renewcommand{\hom}{{\mathrm{Hom}}}
\newcommand{\spec}{{\mathrm{Spec}}}
\newcommand{\aff}{\mathrm{aff}}
\newcommand{\conv}{\mathrm{conv}}
\newcommand{\stab}{\mathrm{STAB}}
\newcommand{\hstab}{\mathrm{HSTAB}}
\newcommand{\tstab}{\mathrm{TSTAB}}
\newcommand{\qstab}{\mathrm{QSTAB}}
\newcommand{\xip}{\xi^{+}}
\newcommand{\ekp}{E_\KKK[\msPPP]}
\newcommand{\eksg}{{E_\KKK[\stab(G)]}}
\newcommand{\ekhsg}{{E_\KKK[\hstab(G)]}}
\newcommand{\ektsg}{{E_\KKK[\tstab(G)]}}
\newcommand{\ekqsg}{{E_\KKK[\qstab(G)]}}
\newtheorem{thm}{Theorem}[section]
\newtheorem{fact}[thm]{Fact}
\newtheorem{example}[thm]{Example}
\newtheorem{lemma}[thm]{Lemma}
\newtheorem{cor}[thm]{Corollary}
\newtheorem{definition}[thm]{Definition}
\newtheorem{prop}[thm]{Proposition}
\newtheorem{remark}[thm]{Remark}
\newcommand{\bigzerou}{\smash{\lower1.7ex\hbox{\bg 0}}}
\newcommand{\bigastu}{\smash{\lower1.7ex\hbox{\bg *}}}
\newcommand{\refeq}[1]{(\ref{#1})}
\numberwithin{equation}{section}
\newcommand{\mylabel}[1]{{\label{#1}\tt [#1]}}
\let\mylabel=\label
\title{%
On the \gor\ property of the Ehrhart ring of the stable set polytope
of an h-perfect graph%
}
\author{Mitsuhiro MIYAZAKI%
}
\date{\normalsize
Department of Mathematics, Kyoto University of Education,\\
1 Fujinomori, Fukakusa, Fushimi-ku, Kyoto, 612-8522, Japan\\
g53448@kyokyo-u.ac.jp%
}
\begin{document}

\maketitle

\sloppy

\begin{abstract}
In this paper,
we give a criterion of the \gor\  property of
the Ehrhart ring of the stable set polytope of
an h-perfect graph:
the Ehrhart ring of the stable set polytope of an h-perfect graph $G$ is
\gor\ if and only if
(1) 
sizes of maximal cliques are constant (say $n$) and
(2)
(a)
$n=1$, 
(b)
$n=2$ and there is no odd cycle without chord and length at least 7 or
(c)
$n\geq 3$ and there is no odd cycle without chord and length at least 5.
\\
Key Words: \gor\ ring, h-perfect graph, Ehrhart ring, stable set polytope
\\
MSC: 13H10, 52B20, 13C14
\end{abstract}

\section{Introduction}

Recently, Hibi and Tsuchiya \cite{ht} showed that the Ehrhart ring of the
stable set polytope of an odd cycle graph is \gor\ if and only if the length
of the cycle is less than or equal to 5.
They used the fact that a cycle graph is t-perfect.
On the other hand, Ohsugi and Hibi \cite{oh} showed that the Ehrhart ring
of the stable set polytope of a perfect graph is \gor\ if and only if 
all maximal cliques have the same size.

Meanwhile, Sbihi and Uhry \cite{su} introduced the notion of h-perfect graphs
as a common generalization of perfect and t-perfect graphs:
a graph is h-perfect if its stable set polytope is defined by the constraints
nonnegativity of vertices and the constraints corresponding to cliques and
odd cycles.

In this paper, we characterize \gor\ property of the Ehrhart ring of the
stable set polytope of an h-perfect graph
by using the trace of the canonical module of a \cm\ ring.
This result is a generalization of both results of Ohsugi-Hibi and Hibi-Tsuchiya.

\section{Preliminaries}

\mylabel{sec:pre}

In this section, we establish notation and terminology.
In this paper, all rings and algebras are assumed to 
be commutative 
with an identity element 
unless stated otherwise.
Further, all graphs are finite simple graphs without loop.
We denote the set of nonnegative integers, 
the set of integers, 
the set of rational numbers and 
the set of real numbers
by $\NNN$, $\ZZZ$, $\QQQ$ and $\RRR$ respectively.
For a set $X$, we denote by $\#X$ the cardinality of $X$.
For sets $X$ and $Y$, we define $X\setminus Y\define\{x\in X\mid x\not\in Y\}$.
For nonempty sets $X$ and $Y$, we denote the set of maps from $X$ to $Y$ by $Y^X$.
If $X$ is a finite set, we identify $\RRR^X$ with the Euclidean space 
$\RRR^{\#X}$.
For $f$, $f_1$, $f_2\in\RRR^X$ and $a\in \RRR$,
we define
maps $f_1\pm f_2$ and $af$ 
by
$(f_1\pm f_2)(x)=f_1(x)\pm f_2(x)$ and
$(af)(x)=a(f(x))$
for $x\in X$.
We denote the zero map $X\ni x\mapsto 0\in \RRR$ by $0$.
Let $A$ be a subset of $X$.
We define the characteristic function $\chi_A\in\RRR^X$ by
$\chi_A(x)=1$ for $x\in A$ and $\chi_A(x)=0$ for $x\in X\setminus A$.
For a nonempty subset $\msXXX$ of $\RRR^X$, we denote by $\conv\msXXX$
(resp.\ $\aff\msXXX$)
the convex hull (resp.\ affine span) of $\msXXX$.

\begin{definition}
\mylabel{def:xi+}
\rm
Let $X$ be a finite set and $\xi\in\RRR^X$.
For $B\subset X$, we set $\xip(B)\define\sum_{b\in B}\xi(b)$.
We define the empty sum to be 0, i.e., if $B=\emptyset$, then $\xi^+(B)=0$.
\end{definition}

A stable set of a graph $G=(V,E)$ is a subset $S$ of $V$ with no two
elements of $S$ are adjacent.
We treat the empty set as a stable set.

\begin{definition}
\rm
The stable set polytope $\stab(G)$ of a graph $G=(V,E)$ is
$$
\conv\{\chi_S\in\RRR^V\mid \mbox{$S$ is a stable set of $G$.}\}
$$
\end{definition}
It is clear that for $f\in\stab(G)$,

\begin{enumerate}
\item
\mylabel{item:nonneg}
$f(x)\geq 0$ for any $x\in V$.
\item
\mylabel{item:clique}
$f^+(K)\leq 1$ for any clique $K$ in $G$.
\item
\mylabel{item:cycle}
$f^+(C)\leq\frac{\#C-1}{2}$ for any odd cycle $C$.
\end{enumerate}

\begin{definition}
\rm
We set
$$
\hstab(G)\define\{f\in \RRR^V\mid\mbox{$f$ satisfies \ref{item:nonneg}, 
\ref{item:clique} and \ref{item:cycle} above}\}.
$$
If $\hstab(G)=\stab(G)$, 
then we say that $G$ is an h-perfect graph.
\end{definition}
It is immediately seen that $\stab(G)\subset\hstab(G)$
for a general graph $G$.
Note that $\hstab(G)$ is a convex polytope since it is bounded by
\ref{item:nonneg} and 
\ref{item:clique}.

We fix notation about Ehrhart rings.
Let $\KKK$ be a field, $X$ a finite set  
and $\msPPP$ a rational convex polytope in $\RRR^X$, i.e., 
a convex polytope whose vertices are contained in $\QQQ^X$.
Let $-\infty$ be a new element
with $-\infty\not\in X$ and
set $X^-\define X\cup\{-\infty\}$.
Also let $\{T_x\}_{x\in X^-}$ be
a family of indeterminates indexed by $X^-$.
For $f\in\ZZZ^{X^-}$, 
we denote the Laurent monomial 
$\prod_{x\in X^-}T_x^{f(x)}$ by $T^f$.
We set $\deg T_x=0$ for $x\in X$ and $\deg T_{-\infty}=1$.
Then the Ehrhart ring of $\msPPP$ over a field $\KKK$ is the $\NNN$-graded subring
$$
\KKK[T^f\mid f\in \ZZZ^{X^-}, f(-\infty)>0, \frac{1}{f(-\infty)}f|_X\in\msPPP]
$$
of the Laurent polynomial ring $\KKK[T_x^{\pm1}\mid x\in X][T_{-\infty}]$,
where $f|_X$ is the restriction of $f$ to $X$.
We denote the Ehrhart ring of $\msPPP$ over $\KKK$ by $\ekp$.

It is known that $\ekp$ is Noetherian.
Therefore normal and \cm\ by the criterion of Hochster \cite{hoc}.
Further, 
by the description of the canonical module of a normal affine semigroup ring
by Stanley \cite[p.\ 82]{sta2}, we see 
the following.

\begin{lemma}
\mylabel{lem:sta desc}
The ideal 
$$\bigoplus_{f\in\ZZZ^{X^-}, f(-\infty)>0, \frac{1}{f(-\infty)}f|_X\in\relint\msPPP}
\KKK T^f
$$
of $\ekp$ is the canonical module of $\ekp$,
where $\relint\msPPP$ denotes the interior of $\msPPP$ in the topological space
$\aff\msPPP$.
\end{lemma}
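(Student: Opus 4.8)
The plan is to recognize $\ekp$ as a normal affine semigroup ring and then apply Stanley's description of the canonical module directly; the only genuine work is translating ``relative interior of the defining cone'' into ``$\relint\msPPP$'' as the statement requires.

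First I would set up the cone. Writing an element $g\in\RRR^{X^-}$ as a pair $(v,h)$ with $v=g|_X$ and $h=g(-\infty)$, put
$$
C\define\{(v,h)\in\RRR^{X^-}\mid h\geq 0,\ v\in h\msPPP\},
$$
where the height-$0$ slice is understood to be $\{0\}$ since $\msPPP$ is bounded. Then $C$ is a rational polyhedral cone, pointed with apex at the origin, and by construction the monomials $T^f$ generating $\ekp$ are exactly those with $f\in C\cap\ZZZ^{X^-}$ and $f(-\infty)>0$. Since $\ekp$ is normal by Hochster's criterion, its underlying affine semigroup is precisely $C\cap\ZZZ^{X^-}$, so $\ekp$ is the normal affine semigroup ring of $C\cap\ZZZ^{X^-}$.

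Next I would quote Stanley's theorem: the canonical module of this semigroup ring is $\bigoplus_{f\in\relint C\cap\ZZZ^{X^-}}\KKK T^f$, where $\relint C$ denotes the relative interior of $C$ inside its linear span. It then remains to prove the identity
$$
\relint C=\{(v,h)\in\RRR^{X^-}\mid h>0,\ \tfrac{1}{h}v\in\relint\msPPP\},
$$
after which intersecting with $\ZZZ^{X^-}$ yields exactly the index set in the statement.

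The geometric heart is this last identity, and it is the step I would treat with most care. Fixing $p_0\in\msPPP$ and writing $\aff\msPPP=p_0+V_0$ with $V_0$ the $(\dim\msPPP)$-dimensional space of directions, one checks that the linear span of $C$ is $\RRR\cdot(p_0,1)\oplus(V_0\times\{0\})$, of dimension $\dim\msPPP+1$. The proper faces of $C$ are exactly the cones over the proper faces of $\msPPP$, together with the apex; hence a point $(v,h)$ with $h>0$ lies on the relative boundary of $C$ precisely when $v/h$ lies on the boundary of $\msPPP$ relative to $\aff\msPPP$, that is, when $v/h\notin\relint\msPPP$. This gives the displayed identity. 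The main obstacle is precisely this facial correspondence together with the bookkeeping of the ambient spaces, $\aff\msPPP$ in $\RRR^X$ versus the linear span of $C$ in $\RRR^{X^-}$, especially when $\msPPP$ is not full-dimensional: the scaling map $p\mapsto(p,1)$ and the cone structure make the relative interiors correspond, but each dimension count must be performed relative to the correct ambient space.
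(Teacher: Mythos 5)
Your proposal is correct and takes essentially the same route as the paper, which proves the lemma simply by citing Stanley's description of the canonical module of a normal affine semigroup ring; your write-up merely fills in the details the paper leaves implicit, namely the cone $C$ over $\msPPP\times\{1\}$, the identification $\ekp=\KKK[C\cap\ZZZ^{X^-}]$, and the equality $\relint C=\{(v,h)\mid h>0,\ v/h\in\relint\msPPP\}$. One small remark: normality is not what identifies the exponent semigroup with $C\cap\ZZZ^{X^-}$ — that is immediate from convexity of $\msPPP$ (the set of admissible exponents is already closed under addition), and it is this saturation that in turn gives normality via Hochster — but this ordering issue does not affect the validity of your argument.
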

We denote the ideal of the above lemma by $\omega_{\ekp}$ and call the canonical ideal of $\ekp$.


Let $R$ be a Noetherian normal domain.
Then the set of divisorial ideals $\Div(R)$ form a group by the operation
$I\cdot J\define R:_{Q(R)}(R:_{Q(R)} IJ)$ for $I$, $J\in\Div(R)$,
where $Q(R)$ is the quotient field of $R$.
See e.g., \cite[Chapter I]{fos} for details.
We denote the $n$-th power of $I\in\Div(R)$ in this group by $I^{(n)}$.
Note that if $R$ is a \cm\ local or graded ring over a field with canonical
module $\omega$, then $\omega$ is isomorphic to 
a divisorial ideal.
See e.g., \cite[Chapter 3]{bh} for details.


\section{The trace of the canonical module and \gor\ property}

\mylabel{sec:gor}

In this section, we give a criterion of \gor\ property of the Ehrhart ring
of $\hstab(G)$ of a graph $G$.
As a consequence, we give a criterion of \gor\ 
property of the Ehrhart ring of the stable set polytope of an h-perfect graph.
First we recall the following.

\begin{definition}
\rm
Let $R$ be a ring and $M$ an $R$-module.
We set 
$$
\trace(M)\define\sum_{\varphi\in\hom(M,R)}\varphi(M)
$$
and call $\trace(M)$ the trace of $M$.
\end{definition}

Next we recall the following basic fact on the trace of an ideal.

\begin{fact}[{\cite[Lemma 1.1]{hhs}}]
\mylabel{fact:hhs1.1}
Let $R$ be a Noetherian ring and $I$ an ideal of $R$ with $\grade I>0$.
Also let $Q(R)$ be the total quotient ring of fractions of $R$
and set $I^{-1}\define\{x\in Q(R)\mid xI\subset R\}$.
Then
$$
\trace(I)=I^{-1}I.
$$
\end{fact}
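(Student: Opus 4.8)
The plan is to deduce the identity from a natural identification of the dual module $\hom(I,R)$ with the fractional ideal $I^{-1}$, after which the trace can be read off almost directly.

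First I would exploit the hypothesis $\grade I>0$, which guarantees that $I$ contains a non-zerodivisor $a$; since $a$ is a non-zerodivisor, it becomes invertible in the total quotient ring $Q(R)$. The goal of this first step is to show that every $\varphi\in\hom(I,R)$ is given by multiplication by a single element of $Q(R)$. To this end I would fix such an $a\in I$ and set $x_\varphi\define\varphi(a)/a\in Q(R)$. For an arbitrary $b\in I$, the $R$-linearity of $\varphi$ together with commutativity of $R$ yields $a\varphi(b)=\varphi(ab)=\varphi(ba)=b\varphi(a)$, and multiplying by $a^{-1}$ in $Q(R)$ gives $\varphi(b)=b\varphi(a)/a=x_\varphi b$. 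Hence $\varphi$ is multiplication by $x_\varphi$, and since $\varphi(b)=x_\varphi b\in R$ for every $b\in I$, we have $x_\varphi\in I^{-1}$. Conversely, each $x\in I^{-1}$ defines a homomorphism $I\to R$ by $b\mapsto xb$. I would then check that these two assignments are mutually inverse, giving an identification of $\hom(I,R)$ with $I^{-1}$ under which $\varphi$ corresponds to $x_\varphi$.

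With this correspondence in hand, the computation of the trace is immediate: for each $\varphi$ the image $\varphi(I)$ equals $x_\varphi I$, so
$$
\trace(I)=\sum_{\varphi\in\hom(I,R)}\varphi(I)=\sum_{x\in I^{-1}}xI=I^{-1}I,
$$
the last equality being just the definition of the product of the fractional ideal $I^{-1}$ with $I$; note that $I^{-1}I\subset R$ because $xI\subset R$ for every $x\in I^{-1}$, so this is genuinely an ideal of $R$.

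The one step that needs care, and which I regard as the crux, is the claim that every $\varphi$ is a single multiplication operator. This is exactly where $\grade I>0$ is indispensable: without a non-zerodivisor in $I$ one cannot form the well-defined element $\varphi(a)/a\in Q(R)$, nor conclude that the resulting $x_\varphi$ is independent of the chosen $b$. Once that identification is secured, everything else is a formal unwinding of the definitions of $\trace$ and of the product of fractional ideals.
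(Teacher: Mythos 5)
Your proof is correct. Note that the paper itself gives no proof of this statement: it is imported as a Fact, with a citation to \cite[Lemma 1.1]{hhs}, so there is no internal argument to compare against. Your argument — using the non-zerodivisor $a\in I$ (supplied by $\grade I>0$) to show every $\varphi\in\hom(I,R)$ is multiplication by the element $\varphi(a)/a\in Q(R)$, thereby identifying $\hom(I,R)$ with $I^{-1}$ and reading off $\trace(I)=I^{-1}I$ — is precisely the standard proof of that cited lemma, with all the relevant points (well-definedness of $x_\varphi$, both inclusions, and where the grade hypothesis is used) handled correctly.
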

Note that if $R$ is a domain and $I$ is a divisorial ideal, then $I^{-1}=I^{(-1)}$.
Moreover, we recall the following.

\begin{fact}[{\cite[Lemma 2.1]{hhs}}]
\mylabel{fact:hhs2.1}
Let $R$ be a \cm\ local or graded ring over a field with canonical
module $\omega_R$.
Then for $\pppp\in\spec(R)$,
$$
R_\pppp\mbox{ is \gor}\iff
\pppp\not\supset\trace(\omega_R).
$$
In particular, $R$ is \gor\ if and only if $\trace(\omega_R)\ni 1$.
\end{fact}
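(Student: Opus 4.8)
The plan is to reduce the statement to a purely local question and then settle it using the standard structure theory of the canonical module. The first observation is that $\trace$ commutes with localization for a finitely generated module over a Noetherian ring: the trace is the image of the evaluation map $\omega_R\otimes_R\hom(\omega_R,R)\to R$, and both $\hom$ and the formation of images localize, so $\trace(\omega_R)_\pppp=\trace_{R_\pppp}((\omega_R)_\pppp)$ for every $\pppp\in\spec(R)$. Combined with the compatibility $(\omega_R)_\pppp\cong\omega_{R_\pppp}$ of the canonical module with localization (see \cite[Chapter 3]{bh}), the condition $\pppp\not\supset\trace(\omega_R)$ becomes equivalent to $\trace(\omega_{R_\pppp})=R_\pppp$. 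Thus it suffices to prove, for a \cm\ local ring $(S,\mmmm)$ with canonical module $\omega_S$, the equivalence that $S$ is \gor\ if and only if $\trace(\omega_S)=S$. The displayed equivalence then follows by applying this to $S=R_\pppp$, and the final assertion follows by taking $\pppp$ to be the (graded) maximal ideal, where $\pppp\not\supset\trace(\omega_R)$ simply means $1\in\trace(\omega_R)$ and where being \gor\ at $\pppp$ coincides with $R$ being \gor.

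The forward implication of the local statement is immediate: if $S$ is \gor\ then $\omega_S\cong S$, and the identity map shows $\trace(\omega_S)=\trace(S)=S$. The substance is the converse. Here I would first record the elementary fact that $\trace(M)=R$ forces $R$ to be a direct summand of some $M^{\oplus k}$: writing $1=\sum_{i=1}^k\varphi_i(m_i)$ with $\varphi_i\in\hom(M,R)$ and $m_i\in M$, the maps $\Phi=(\varphi_1,\dots,\varphi_k)\colon M^{\oplus k}\to R$ and $s\colon R\to M^{\oplus k}$ sending $1\mapsto(m_1,\dots,m_k)$ satisfy $\Phi\circ s=\mathrm{id}_R$. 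Applying this to $M=\omega_S$ produces an isomorphism $\omega_S^{\oplus k}\cong S\oplus N$ for some $S$-module $N$.

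Next I would exploit the homothety isomorphism $\hom(\omega_S,\omega_S)\cong S$, which holds for any \cm\ local ring with canonical module \cite[Chapter 3]{bh}. Applying $\hom(-,\omega_S)$ to $\omega_S^{\oplus k}\cong S\oplus N$ and using this isomorphism yields $S^{\oplus k}\cong\omega_S\oplus\hom(N,\omega_S)$, so $\omega_S$ is a direct summand of a finite free module, hence projective, hence free over the local ring $S$; say $\omega_S\cong S^{\oplus r}$. Feeding this back into $\hom(\omega_S,\omega_S)\cong S$ gives $S\cong\hom(S^{\oplus r},S^{\oplus r})\cong S^{\oplus r^2}$, and comparing minimal numbers of generators forces $r^2=1$, so $r=1$ and $\omega_S\cong S$, i.e., $S$ is \gor.

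The main obstacle is exactly this converse, and within it the step that upgrades ``$\omega_S$ is a generator'' to ``$\omega_S$ is free of rank one.'' The reflexivity input $\hom(\omega_S,\omega_S)\cong S$ is what makes the duality argument close; without it one only obtains that $S$ is a direct summand of a power of $\omega_S$. In the case of interest $\ekp$ is a normal domain, so $\omega_R$ may be taken to be a divisorial ideal and one can argue more directly: by \ref{fact:hhs1.1} one has $\trace(\omega_R)=\omega_R^{-1}\omega_R$, and $\pppp\not\supset\omega_R^{-1}\omega_R$ says precisely that $(\omega_R)_\pppp$ is invertible, hence principal over $R_\pppp$, which is the \gor\ condition at $\pppp$. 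I would present the $\hom$-argument as the general proof and record the divisorial reformulation as the form convenient for the later sections.
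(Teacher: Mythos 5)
Your proof is correct, but note the point of comparison: the paper itself offers no proof of this statement --- it is imported verbatim as a Fact from \cite[Lemma 2.1]{hhs} --- so the only meaningful comparison is with the argument in that source. Your reduction to the local case is sound ($\omega_R$ is finitely presented over a Noetherian ring, so $\hom(\omega_R,R)$ and the image of the evaluation map both localize, and $(\omega_R)_\pppp\cong\omega_{R_\pppp}$), and your converse takes a genuinely different route from the standard one. In \cite{hhs}, $\trace(\omega_S)=S$ over a local ring $S$ immediately yields a single surjective $\varphi\in\hom(\omega_S,S)$, since otherwise all the images $\varphi(\omega_S)$ would lie in the maximal ideal and so would their sum; the surjection onto the free module $S$ splits, giving $\omega_S\cong S\oplus\ker\varphi$, and indecomposability of $\omega_S$ --- which follows because $\hom(\omega_S,\omega_S)\cong S$ is local and hence has no nontrivial idempotents --- forces $\omega_S\cong S$. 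You instead keep all $k$ homomorphisms, realize $S$ as a direct summand of $\omega_S^{\oplus k}$, dualize into $\omega_S$ to conclude that $\omega_S$ is free, and pin down the rank via $S\cong\hom(S^{\oplus r},S^{\oplus r})\cong S^{\oplus r^2}$. Both arguments pivot on the homothety isomorphism $\hom(\omega_S,\omega_S)\cong S$; yours trades the indecomposability observation for a projectivity-plus-rank count, and it is equally valid. Two small polish points. First, in the graded case you should remark that $\trace(\omega_R)$ is a homogeneous ideal, so that failing to be contained in the graded maximal ideal is the same as being the unit ideal --- or, more simply, observe that an ideal contained in no maximal ideal equals $R$, which disposes of the ``in particular'' clause in the local and graded cases simultaneously. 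Second, your closing divisorial reformulation is exactly the form in which the present paper uses the Fact: in the proof of Theorem \ref{thm:gor char} it is combined with Fact \ref{fact:hhs1.1} to write $\trace(\omega)=\omega^{(-1)}\omega$ for the normal domain $\ekhsg$, so recording that version is well aligned with the application.
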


In the rest of this paper, we fix a graph $G=(V,E)$.
First we note the following.

\begin{lemma}
\mylabel{lem:ess cond}
Let $f\in\RRR^V$.
Then $f\in \hstab(G)$ if and only if
\begin{enumerate}
\item
\mylabel{item:ess nonneg}
$f(x)\geq 0$ for any $x\in V$,
\item
\mylabel{item:max clique}
$f^+(K)\leq 1$ for any maximal clique $K$ of $G$ and
\item
\mylabel{item:odd cycle}
$f^+(C)\leq\frac{\#C-1}{2}$ for any odd cycle $C$ without chord
with length at least 5.
\end{enumerate}
\end{lemma}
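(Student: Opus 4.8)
The plan is to prove the two implications separately, the substance lying entirely in the ``if'' direction. For the ``only if'' direction, suppose $f\in\hstab(G)$. Then condition \ref{item:ess nonneg} is literally \ref{item:nonneg}; condition \ref{item:max clique} follows from \ref{item:clique} because a maximal clique is in particular a clique; and condition \ref{item:odd cycle} follows from \ref{item:cycle} because a chordless odd cycle of length at least $5$ is in particular an odd cycle. So this direction requires no real work.

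For the ``if'' direction, assume $f$ satisfies \ref{item:ess nonneg}, \ref{item:max clique} and \ref{item:odd cycle}; I must recover the full list \ref{item:nonneg}, \ref{item:clique}, \ref{item:cycle}. Condition \ref{item:nonneg} is again \ref{item:ess nonneg}. For \ref{item:clique}, let $K$ be an arbitrary clique. It is contained in some maximal clique $K'$, and since $f\geq 0$ by \ref{item:ess nonneg} we get $f^+(K)\leq f^+(K')\leq 1$ using \ref{item:max clique}; in particular every edge $\{u,w\}$ of $G$ satisfies $f(u)+f(w)\leq 1$, a fact I will reuse below.

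The remaining and only genuinely nontrivial point is \ref{item:cycle}: from the bound \ref{item:odd cycle} for chordless odd cycles of length at least $5$, I must establish $f^+(C)\leq\frac{\#C-1}{2}$ for \emph{every} odd cycle $C$. I would argue by induction on the length $m=\#C$. If $m=3$, then $C$ is a triangle, hence a clique, so $f^+(C)\leq 1=\frac{m-1}{2}$ by the clique bound just proved. If $m\geq 5$ and $C$ is chordless, the inequality is exactly \ref{item:odd cycle}. The case that carries the argument is when $m\geq 5$ and $C$ has a chord $\{u,w\}$. This chord splits $C$ into two arcs $P_1$, $P_2$ of lengths $\ell_1$ and $\ell_2$ with $\ell_1+\ell_2=m$; since $m$ is odd, $\ell_1$ and $\ell_2$ have opposite parity, so I may assume $\ell_1$ is even and $\ell_2$ is odd, and since $u,w$ are nonadjacent on $C$ both arcs have length at least $2$. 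Closing the arc $P_1$ with the chord produces a genuine odd cycle $C_1$ of length $\ell_1+1<m$, to which the induction hypothesis applies, giving $f^+(C_1)\leq\frac{\#C_1-1}{2}=\frac{\ell_1}{2}$.

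Finally I would combine this with a matching estimate on the rest of $C$. Writing $I$ for the set of internal vertices of $P_2$ (those other than $u,w$), the vertex set of $C$ is the disjoint union of the vertex set of $C_1$ and $I$, whence $f^+(C)=f^+(C_1)+f^+(I)$ by additivity of $f^+$. Now $\#I=\ell_2-1$ is even, and the consecutive vertices along $P_2$ pair up into $\#I/2$ edges of $G$; summing the edge inequality $f(u')+f(w')\leq 1$ over these pairs yields $f^+(I)\leq\frac{\ell_2-1}{2}$. Adding the two estimates gives $f^+(C)\leq\frac{\ell_1+\ell_2-1}{2}=\frac{m-1}{2}$, as required. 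I expect the main obstacle to be purely bookkeeping: getting the parity split of the chord right so that the odd piece is dispatched by induction while the even piece is exactly coverable by a vertex-disjoint edge matching; once that is set up, every step reduces to nonnegativity and the size-$2$ clique inequality.
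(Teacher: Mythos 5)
Your proof is correct. Note, however, that the paper gives no proof of this lemma at all: it is introduced with ``First we note the following'' and used immediately afterwards, the reduction from arbitrary cliques and arbitrary odd cycles to maximal cliques and chordless odd cycles of length at least $5$ being treated as routine. So there is no argument in the paper to compare yours against; what you have written supplies the omitted verification, and it is sound. The only step requiring care is exactly the one you flagged: the induction on the length $m=\#C$ of an odd cycle possessing a chord $\{u,w\}$. Your parity bookkeeping is right: writing $m=\ell_1+\ell_2$ for the two arcs, with $\ell_1$ even, $\ell_2$ odd, and both at least $2$ (because a chord joins vertices nonadjacent on $C$), the even arc closed by the chord is an odd cycle $C_1$ of length $\ell_1+1<m$, and the $\ell_2-1$ internal vertices of the odd arc partition into $(\ell_2-1)/2$ consecutive pairs, each an edge of $G$, so the size-$2$ clique inequality gives $f^+(I)\leq\frac{\ell_2-1}{2}$; adding $f^+(C_1)\leq\frac{\ell_1}{2}$ yields $f^+(C)\leq\frac{m-1}{2}$. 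One small point worth making explicit: the induction must be strong induction on the length, since the shorter cycle $C_1$ may itself have chords and so need not be dispatched by hypothesis \ref{item:odd cycle} directly; your phrasing (``the induction hypothesis applies'') is consistent with this, but stating ``we induct on $m$, assuming the bound for all odd cycles of length less than $m$'' would remove any ambiguity.
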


In order to make statements simple, we make the following.

\begin{definition}
\rm
\mylabel{def:un}
Let $n\in\ZZZ$.
We set
$\UUUUUn\define\{\mu\in\ZZZ^{V^-}\mid
\mu(z)\geq n$ for any $z\in V$,
$\mu^+(K)\leq \mu(-\infty)-n$ for any maximal clique $K$ of $G$ and
$\mu^+(C)\leq\mu(-\infty)\frac{\#C-1}{2}-n$ for any odd cycle $C$ without
chord and length at least $5\}$.
\end{definition}
Note that if $\mu\in \UUUUU^{(n)}$ and $\mu'\in\UUUUU^{(m)}$, then
$\mu+\mu'\in\UUUUU^{(n+m)}$.
By Lemmas \ref{lem:sta desc}, \ref{lem:ess cond} and Definition \ref{def:un}, we see the
following.

\begin{cor}
\mylabel{cor:in e and omega}
Let $\mu\in\ZZZ^{V^-}$.
Then $T^\mu\in\ekhsg$ if and only if $\mu\in\UUUUU^{(0)}$.
Moreover, $T^\mu\in\omega_{\ekhsg}$ if and only if
$\mu\in\UUUUU^{(1)}$.
\end{cor}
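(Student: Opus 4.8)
The plan is to reduce each of the two "if and only if" statements to already-established facts by a direct unwinding of definitions. The key observation is that the defining conditions of $\ekhsg$ and $\omega_{\ekhsg}$ (via Lemmas \ref{lem:sta desc} and \ref{lem:ess cond}) are conditions on the scaled restriction $\frac{1}{\mu(-\infty)}\mu|_V$, whereas $\UUUUU^{(n)}$ is defined by integer inequalities directly on $\mu$; so the entire argument is a bookkeeping translation between these two forms.

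First I would treat the membership $T^\mu\in\ekhsg$. By the definition of the Ehrhart ring, $T^\mu$ lies in $\ekhsg$ if and only if $\mu\in\ZZZ^{V^-}$ with $\mu(-\infty)>0$ and $\frac{1}{\mu(-\infty)}\mu|_V\in\hstab(G)$. Writing $f\define\frac{1}{\mu(-\infty)}\mu|_V$ and applying Lemma \ref{lem:ess cond}, the condition $f\in\hstab(G)$ is equivalent to the three inequalities \ref{item:ess nonneg}, \ref{item:max clique}, \ref{item:odd cycle}. Multiplying each by $\mu(-\infty)>0$ converts them, respectively, into $\mu(z)\geq 0$ for all $z\in V$, $\mu^+(K)\leq\mu(-\infty)$ for each maximal clique $K$, and $\mu^+(C)\leq\mu(-\infty)\frac{\#C-1}{2}$ for each chordless odd cycle $C$ of length at least $5$. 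Comparing with Definition \ref{def:un} at $n=0$, these are exactly the conditions defining $\UUUUU^{(0)}$. The only subtlety is the hypothesis $\mu(-\infty)>0$: I would note that $\mu\in\UUUUU^{(0)}$ already forces $\mu(-\infty)>0$, since $V\neq\emptyset$ gives a vertex $z$ lying in some maximal clique $K$, whence $0\leq\mu(z)\leq\mu^+(K)\leq\mu(-\infty)$, and the presence of a clique constraint with the strict positivity built into the Ehrhart construction rules out $\mu(-\infty)=0$; thus the two descriptions agree.

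Next I would treat $T^\mu\in\omega_{\ekhsg}$. By Lemma \ref{lem:sta desc}, this holds if and only if $\mu(-\infty)>0$ and $\frac{1}{\mu(-\infty)}\mu|_V\in\relint\hstab(G)$. The work here is to identify $\relint\hstab(G)$ with the locus where the inequalities of Lemma \ref{lem:ess cond} hold strictly. Since $\hstab(G)$ is a full-dimensional polytope in $\RRR^V$ (it contains $0$ and each $\chi_{\{v\}}$, so $\aff\hstab(G)=\RRR^V$) cut out by the finitely many inequalities in Lemma \ref{lem:ess cond}, a point lies in its relative interior precisely when every one of those inequalities is strict. Imposing strictness on the scaled point and clearing the positive denominator $\mu(-\infty)$ yields $\mu(z)\geq 1$ for all $z\in V$, $\mu^+(K)\leq\mu(-\infty)-1$ for each maximal clique $K$, and $\mu^+(C)\leq\mu(-\infty)\frac{\#C-1}{2}-1$ for each relevant odd cycle $C$, where strict inequality between integers has been rewritten as the integer inequality with a shift by $1$. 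These are exactly the defining conditions of $\UUUUU^{(1)}$, giving the second equivalence.

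The main obstacle I anticipate is the relative-interior step: one must be careful that the relative interior is taken in $\aff\hstab(G)$ and that this affine span is all of $\RRR^V$, so that "relative interior" coincides with topological interior and is genuinely described by strict satisfaction of the listed inequalities. This requires knowing that the three families in Lemma \ref{lem:ess cond} constitute an irredundant (or at least complete) inequality description whose strict version cuts out the interior; full-dimensionality, verified via the explicit vertices $0,\chi_{\{v\}}$, is what makes the passage from strict inequalities to relative interior clean, and translating strict inequalities among integer-valued quantities into the $-1$ shift in Definition \ref{def:un} is the routine but essential final bookkeeping.
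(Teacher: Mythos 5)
Your proposal is correct and takes essentially the paper's route: the paper offers no separate proof, stating the corollary as an immediate consequence of Lemma \ref{lem:sta desc}, Lemma \ref{lem:ess cond} and Definition \ref{def:un}, which is exactly the unwinding you perform (clear the positive denominator $\mu(-\infty)$, identify $\relint\hstab(G)$ with the strict form of the inequalities via full-dimensionality, and convert strict inequalities between integers into the shift by $1$). The only blemish is your parenthetical claim that $\mu\in\UUUUU^{(0)}$ forces $\mu(-\infty)>0$ --- this is false for $\mu=0$ --- but that lone case is harmless, since $0\in\UUUUU^{(0)}$ and $T^0=1\in\ekhsg$, so both sides of the first equivalence hold there; any nonzero $\mu\in\UUUUU^{(0)}$ does have $\mu(-\infty)>0$ because every vertex lies in some maximal clique.
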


Next we show the following.

\begin{prop}
\mylabel{prop:symb power}
Let $n\in\ZZZ$ and $\mu\in\ZZZ^{V^-}$
and
set $\omega\define\omega_\ekhsg$.
Then $T^\mu\in\omega^{(n)}$ if and only if $\mu\in\UUUUU^{(n)}$.
\end{prop}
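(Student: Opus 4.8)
The plan is to compute $\omega^{(n)}$ from the geometry of the cone underlying the semigroup ring $R\define\ekhsg$. As recorded in Section~2, $R$ is a normal affine semigroup ring: by Corollary~\ref{cor:in e and omega} its monomials are the $T^\mu$ with $\mu\in\UUUUU^{(0)}$, the lattice points of the rational polyhedral cone $C\subseteq\RRR^{V^-}$ cut out by the primitive integral linear forms $\mu\mapsto\mu(z)$ $(z\in V)$, $\mu\mapsto\mu(-\infty)-\mu^+(K)$ ($K$ a maximal clique) and $\mu\mapsto\mu(-\infty)\tfrac{\#C-1}{2}-\mu^+(C)$ ($C$ a chordless odd cycle of length at least $5$); that these forms suffice to define $C$ is Lemma~\ref{lem:ess cond}. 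For such a ring the monomial height-one primes $\pppp$ biject with the facets of $C$, the valuation attached to $\pppp$ sends $T^\mu$ to the value at $\mu$ of the corresponding (primitive) facet form, and a monomial divisorial ideal is determined by prescribing a lower bound for each of these facet-values. By Lemma~\ref{lem:sta desc} the canonical ideal $\omega$ is spanned by the $T^\mu$ with $\mu\in\relint C$, so each facet-value of $\omega$ equals $1$; hence for every $n\in\ZZZ$ the divisorial power $\omega^{(n)}$ is the monomial (fractional) ideal characterised by $T^\mu\in\omega^{(n)}\iff\ell(\mu)\ge n$ for every facet form $\ell$ of $C$.

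Thus the proposition is equivalent to the assertion that, for integral $\mu$, the condition \emph{$\ell(\mu)\ge n$ for all facet forms $\ell$} coincides with $\mu\in\UUUUU^{(n)}$. One inclusion is immediate and disposes of the implication \emph{$\mu\in\UUUUU^{(n)}\Rightarrow T^\mu\in\omega^{(n)}$}: every facet form of $C$ is one of the three listed forms, so the defining inequalities of $\UUUUU^{(n)}$ contain those coming from the facets, whence $\UUUUU^{(n)}\subseteq\{\mu:\ell(\mu)\ge n\ \forall\,\ell\}$. The force of the statement is the reverse inclusion: I must show that \emph{every} listed inequality—whether or not it defines a facet—already holds at level $n$ as soon as all the facet inequalities do.

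To this end I would prove that in fact all three families of listed forms are facet forms, so that the two descriptions agree on the nose. The nonnegativity forms $\mu\mapsto\mu(z)$ and the maximal-clique forms $\mu\mapsto\mu(-\infty)-\mu^+(K)$ are facet forms of $\hstab(G)$ for a soft reason: each is a facet of the smaller polytope $\stab(G)\subseteq\hstab(G)$ (exhibit the requisite affinely independent characteristic vectors of stable sets on the bounding hyperplane), and a defining halfspace of $\hstab(G)$ whose bounding hyperplane already meets $\stab(G)$ in a face of the maximal possible dimension $\#V-1$ must cut out a facet of $\hstab(G)$, both polytopes being full-dimensional. This leaves only the odd-cycle forms to be treated.

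The main obstacle is therefore to prove that the form $\mu\mapsto\mu(-\infty)\tfrac{\#C-1}{2}-\mu^+(C)$ attached to a chordless odd cycle $C$ of length at least $5$ is genuinely facet-defining for $\hstab(G)$ (equivalently, that it is redundant at no level $n$, \emph{including negative $n$}, where a spurious conical-combination identity with total weight exceeding one would otherwise break the bound). I expect to establish this by producing, on the face $\{x\in\hstab(G):\mu^+(C)=\tfrac{\#C-1}{2}\}$, enough affinely independent points to force codimension one. Chordlessness is what should make this work: since $C$ has no chords, every maximal clique meets $C$ in at most an edge, so the odd-cycle inequality is \emph{not} a consequence of the clique inequalities and the cycle behaves as a primitive facet; the maximum stable sets of $C$ supply the points supported on $C$, and for a vertex $w\notin C$ one uses either a maximum stable set of $C$ avoiding $N(w)$, or—when $w$ dominates $C$—a suitable fractional point of $\hstab(G)$ with $x_w>0$ lying on the face. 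Carrying out this construction uniformly in the ambient graph, and in particular guaranteeing the missing affine directions in the dominated case, is the step I expect to demand the most care; once it is in place every listed form is a facet form and the proposition follows from the first paragraph with no further work.
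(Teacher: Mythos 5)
Your reduction of the proposition to polyhedral geometry is sound: for the normal semigroup ring $\ekhsg$ the divisorial powers $\omega^{(n)}$ are indeed the monomial ideals cut out by the facet valuations, with $\omega$ itself given by Lemma \ref{lem:sta desc}, and both your easy inclusion and the lifting argument for the nonnegativity and maximal-clique forms (facets of $\stab(G)$, hence of the larger full-dimensional $\hstab(G)$) are correct. But the proposal has a genuine gap exactly at its declared crux: the claim that every chordless odd-cycle inequality of length at least $5$ is facet-defining for $\hstab(G)$ is never proved --- you describe what you ``expect to establish'' and leave the dominated case (``a suitable fractional point'') unresolved. This is not a routine verification that can be waved through: the lifting trick you used for cliques cannot work here, because an odd-cycle inequality need not define a facet of $\stab(G)$ at all (for the $5$-wheel, every stable set tight on the cycle inequality avoids the hub, so the corresponding face of $\stab(G)$ has codimension $2$; the facet of $\stab(G)$ there is the odd-wheel inequality). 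Since your whole argument funnels through this one claim, for every $n$ simultaneously, the proposition is not proved as written.

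The gap is fillable, and in fact uniformly, with no case distinction on domination. Let $C$ be a chordless odd cycle with $\#C=2k+1\geq 5$ and let $w\in V\setminus C$. The point $p_w\define\frac{k}{2k+1}\chi_C+\frac{1}{2k+1}\chi_{\{w\}}$ lies in $\hstab(G)$: checking against the irredundant system of Lemma \ref{lem:ess cond}, chordlessness forces $\#(K\cap C)\leq 2$ for every clique $K$, so $p_w^+(K)\leq\frac{1}{2k+1}+\frac{2k}{2k+1}=1$, and every chordless odd cycle $C'\neq C$ of length at least $5$ satisfies $\#(C'\cap C)\leq \#C'-1$ (a cycle all of whose vertices lie in the chordless cycle $C$ can use only edges of $C$, hence equals $C$), so $p_w^+(C')\leq\frac{1}{2k+1}+\frac{k}{2k+1}(\#C'-1)\leq\frac{\#C'-1}{2}$ because $\#C'\geq 3$. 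Moreover $p_w^+(C)=k$, so $p_w$ lies on the bounding hyperplane; the family consisting of the $2k+1$ characteristic vectors of maximum stable sets of $C$ (affinely independent by the classical odd-hole computation) together with the $p_w$, $w\in V\setminus C$, is affinely independent, since $p_w$ is the unique member with nonzero $w$-coordinate. That is $\#V$ affinely independent points, hence a facet. With this inserted your proof is complete and genuinely different from the paper's: the paper never mentions facets, computing $\omega^{(n)}$ instead as colon ideals ($\ekhsg:\omega^{m}$ for $n=-m<0$, then $\ekhsg:\omega^{(-n)}$ for $n\geq 2$) against explicit integer test vectors in $\UUUUU^{(\pm 1)}$ that are tight on one prescribed inequality at a time --- its $\mu_3$ is, up to scaling, exactly your fractional point. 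Your route buys a cleaner structural statement (all divisorial powers described at once by facet data); the paper's buys an elementary, self-contained argument that needs no polyhedral dimension counts.
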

\begin{proof}
The cases where $n=0$ and $n=1$ are stated in 
Corollary \ref{cor:in e and omega}.
Next, we consider the case where $n<0$.
Set $m=-n$.
First we show the ``if'' part of this case.
Let $\mu_1$, \ldots, $\mu_m$ be arbitrary elements of $\UUUUU^{(1)}$.
Then, since $\mu\in\UUUUU^{(n)}$, we see that
$\mu+\mu_1+\cdots+\mu_m\in\UUUUU^{(0)}$.
Therefore, by Corollary \ref{cor:in e and omega}, we see that
$T^\mu T^{\mu_1}\cdots T^{\mu_m}\in\ekhsg$.
Since $T^{\mu_1}$, \ldots, $T^{\mu_m}$ are arbitrary Laurent monomials in
$\omega$ by Corollary \ref{cor:in e and omega}, we see that
$$
T^\mu\in \ekhsg:\omega^m=\omega^{(n)}.
$$

Next, we show the ``only if'' part of this case.
Let $N$ be a huge integer ($N>\#V$).
First let $\mu_1$ be an element of $\ZZZ^{V^-}$ with 
$$
\mu_1(z)\define
\left\{
\begin{array}{ll}
1&\quad z\in V,\\
N&\quad z=-\infty.
\end{array}
\right.
$$
Then it is easily seen that $\mu_1\in\UUUUU^{(1)}$.
Thus, $T^{\mu_1}\in\omega$ by Corollary \ref{cor:in e and omega}.
Since $T^\mu\in\omega^{(n)}=\ekhsg:\omega^m$,
we see that
$T^{\mu+m\mu_1}=T^\mu(T^{\mu_1})^m \in\ekhsg$.
Thus, by Corollary \ref{cor:in e and omega}, we see that
$(\mu+m\mu_1)(z)\geq 0$ and therefore
$$\mu(z)\geq -m=n \mbox{ for any $z\in V$}.
$$

Next, let $K$ be an arbitrary maximal clique of $G$.
Set
$$
\mu_2(z)\define
\left\{
\begin{array}{ll}
N&\quad z\in K,\\
1&\quad z\in V\setminus K,\\
(\#K)N+1&\quad z=-\infty.
\end{array}
\right.
$$
We will show that $\mu_2\in\UUUUU^{(1)}$.
First $\mu_2(z)\geq 1$ for any $z\in V$ by the definition of $\mu_2$.
Moreover, let $K'$ be an arbitrary maximal clique of $G$.
If $K'=K$, then
$\mu_2^+(K')=\mu_2^+(K)=(\#K)N= \mu_2(-\infty)-1$.
If $K'\neq K$, then since $\#(K'\cap K)\leq \#K-1$,
$\mu^+_2(K')<(\#K-1)N+\#V<(\#K)N=\mu_2(-\infty)-1$.
Further, let $C$ be an arbitrary odd cycle without chord and length at least 5.
If $K\cap C=\emptyset$, then 
$\mu_2^+(C)\leq \#V\leq N\frac{\#C-1}{2}-1\leq\mu_2(-\infty)\frac{\#C-1}{2}-1$.
Suppose that $K\cap C\neq \emptyset$.
Then $\#(K\cap C)\leq 2$, since $C$ does not have a chord,
and $\#K\geq 2$, since $K$ is a maximal clique.
Thus,
\begin{eqnarray*}
\mu_2^+(C)&\leq&2N+\#V\\
&\leq&3N-1\\
&\leq&2N\frac{\#C-1}{2}-1\\
&\leq&\mu_2(-\infty)\frac{\#C-1}{2}-1.
\end{eqnarray*}
Therefore, $\mu_2\in\UUUUU^{(1)}$ and we see by Corollary \ref{cor:in e and omega} that
$T^{\mu_2}\in\omega$.
Since $T^\mu\in\omega^{(n)}=\ekhsg:\omega^m$,
we see that
$T^{\mu+m\mu_2}=T^\mu(T^{\mu_2})^m\in\ekhsg$
and therefore, $\mu+m\mu_2\in \UUUUU^{(0)}$.
Thus,
$(\mu+m\mu_2)^+(K)\leq (\mu+m\mu_2)(-\infty)$.
Since $\mu_2^+(K)=(\#K)N$ and $\mu_2(-\infty)=(\#K)N+1$, we see that
$$
\mu^+(K)\leq\mu(-\infty)+m=\mu(-\infty)-n.
$$

Next, let $C$ be an arbitrary odd cycle without chord and length at least 5.
Take $c_0\in C$ and set
$$
\mu_3(z)\define
\left\{
\begin{array}{ll}
N(\#C-1)&\quad z\in C\setminus\{c_0\},\\
N(\#C-1)-1&\quad z=c_0,\\
1&\quad z\in V\setminus C,\\
2N(\#C)&\quad z=-\infty.
\end{array}
\right.
$$
We will show that $\mu_3\in\UUUUU^{(1)}$.
It is clear that $\mu_3(z)\geq 1$ for any $z\in V$.
Let $K$ be an arbitrary maximal clique of $G$.
Then $\#(K\cap C)\leq 2$ and therefore
$$
\mu_3^+(K)\leq 2N(\#C-1)+\#V
\leq 2N(\#C)-1
=\mu_3(-\infty)-1.
$$
Let $C'$ be an arbitrary odd cycle without chord and length at least 5.
If $C'=C$, then
\begin{eqnarray}
\mu_3^+(C')&=&\mu_3^+(C)\nonumber\\
&=&(\#C)N(\#C-1)-1\nonumber\\
&=&2N(\#C)\frac{\#C-1}{2}-1\nonumber\\
&=&\mu_3(-\infty)\frac{\#C'-1}{2}-1.
\mylabel{eq:mu3c}
\end{eqnarray}
Suppose that $C'\neq C$.
Then $\#(C\cap C')\leq \#C'-1$, since $C$ does not have a chord.
Therefore,
\begin{eqnarray*}
\mu_3^+(C')&\leq&(\#C'-1)N(\#C-1)+\#V\\
&\leq& N(\#C)(\#C'-1)-1\\
&=& 2N(\#C)\frac{\#C'-1}{2}-1\\
&=&\mu_3(-\infty)\frac{\#C'-1}{2}-1.
\end{eqnarray*}
Thus, we see that $\mu_3\in\UUUUU^{(1)}$ and by Corollary \ref{cor:in e and omega} that
$T^{\mu_3}\in\omega$.
Since $T^\mu\in\omega^{(n)}=\ekhsg:\omega^m$, we see that
$T^{\mu+m\mu_3}=T^\mu(T^{\mu_3})^m\in\ekhsg$
and therefore $\mu+m\mu_3\in\UUUUU^{(0)}$.
Thus,
$$
(\mu+m\mu_3)^+(C)\leq(\mu+m\mu_3)(-\infty)\frac{\#C-1}{2}.
$$
Since
$\mu_3^+(C)=\mu_3(-\infty)\frac{\#C-1}{2}-1$ by equation \refeq{eq:mu3c}, we see that
$$
\mu^+(C)\leq\mu(-\infty)\frac{\#C-1}{2}+m=\mu(-\infty)\frac{\#C-1}{2}-n.
$$
Thus, we see that $\mu\in\UUUUU^{(n)}$.

Next we consider the case where $n\geq 2$.
If $\mu\in\UUUUU^{(n)}$, then for any $\mu'\in\UUUUU^{(-n)}$,
$\mu+\mu'\in\UUUUU^{(0)}$ by the fact noted after Definition \ref{def:un}.
Since $\omega^{(-n)}$ is the $\KKK$-vector space with basis $T^{\mu'}$ with
$\mu'\in\UUUUU^{(-n)}$ by the case shown above, we see that
$T^\mu\in\ekhsg:\omega^{(-n)}=\omega^{(n)}$.

Conversely, assume that $T^\mu\in\omega^{(n)}$.
Set
$$
\mu_4(z)\define
\left\{
\begin{array}{ll}
-1&\quad z\in V,\\
0&\quad z=-\infty.
\end{array}
\right.
$$
Then
it is easily verified that
$\mu_4\in\UUUUU^{(-1)}$ and therefore
by the fact shown above, we see that $T^{\mu_4}\in\omega^{(-1)}$.
Therefore, $T^{\mu+n\mu_4}=T^\mu(T^{\mu_4})^n \in\ekhsg$
and we see by Corollary \ref{cor:in e and omega} that
$\mu+n\mu_4\in\UUUUU^{(0)}$.
Thus, $(\mu+n\mu_4)(z)\geq 0$ for any $z\in V$ and we see that 
$$\mu(z)\geq n$$
for any $z\in V$.
Next, let $K$ be an arbitrary maximal clique of $G$.
Take $k_0\in K$ and  set
$$
\mu_5(z)\define
\left\{
\begin{array}{ll}
0&\quad z\in (K\setminus\{k_0\})\cup\{-\infty\},\\
1&\quad z=k_0,\\
-1&\quad z\in V\setminus K.
\end{array}
\right.
$$
Then, 
it is easily verified that
$\mu_5\in \UUUUU^{(-1)}$
and $\mu_5^+(K)=1$.
In particular, $T^{\mu_5}\in\omega^{(-1)}$.
Since $\mu+n\mu_5\in\UUUUU^{(0)}$ by the same reason as above, we see that
$$
(\mu+n\mu_5)^+(K)\leq \mu(-\infty)+n\mu_5(-\infty)=\mu(-\infty).
$$
Therefore,
$$
\mu^+(K)\leq\mu(-\infty)-n,
$$
since $\mu_5^+(K)=1$.
Finally, let $C$ be an arbitrary odd cycle without chord and length at least 5.
Take $c_0\in C$ and set
$$
\mu_6(z)\define
\left\{
\begin{array}{ll}
0&\quad z\in (C\setminus\{c_0\})\cup\{-\infty\},\\
1&\quad z=c_0,\\
-1&\quad z\in V\setminus C.
\end{array}
\right.
$$
Then, 
it is easily verified that
$\mu_6\in\UUUUU^{(-1)}$ and 
$
\mu_6^+(C)=1
$.
In particular, $T^{\mu_6}\in\omega^{(-1)}$.
Thus, $\mu+n\mu_6\in\UUUUU^{(0)}$ and
since
$$
(\mu+n\mu_6)^+(C)\leq (\mu+n\mu_6)(-\infty)\frac{\#C-1}{2}
=\mu(-\infty)\frac{\#C-1}{2},
$$
we see that 
$$
\mu^+(C)\leq\mu(-\infty)\frac{\#C-1}{2}-n,
$$
since $\mu_6^+(C)=1$.
Therefore, $\mu\in\UUUUU^{(n)}$.
\end{proof}

Now we show the following.

\begin{thm}
\mylabel{thm:gor char}
$\ekhsg$ is \gor\ if and only if
\begin{enumerate}
\item
\mylabel{item:clique size}
Sizes of maximal cliques are constant (say $n$) and
\item
\mylabel{item:3cond}
\begin{enumerate}
\item
\mylabel{item:1}
$n=1$, 
\item
\mylabel{item:2}
$n=2$ and there is no odd cycle without chord and length at least 7 or
\item
\mylabel{item:3}
$n\geq 3$ and there is no odd cycle without chord and length at least 5.
\end{enumerate}
\end{enumerate}
In particular, if $G$ is an h-perfect graph, 
then $\eksg$ is \gor\ if and only if \ref{item:clique size} and \ref{item:3cond}
above are satisfied.
\end{thm}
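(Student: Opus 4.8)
The plan is to reduce the \gor\ property to a concrete arithmetic condition on $\UUUUU^{(1)}$ and $\UUUUU^{(-1)}$ through the trace of the canonical module. By Fact \ref{fact:hhs2.1}, $\ekhsg$ is \gor\ if and only if $1\in\trace(\omega)$, where $\omega\define\omega_{\ekhsg}$. Since $\ekhsg$ is a normal domain and $\omega$ is a divisorial ideal of positive grade, Fact \ref{fact:hhs1.1} together with the note following it gives $\trace(\omega)=\omega^{-1}\omega=\omega^{(-1)}\omega$. By Proposition \ref{prop:symb power}, $\omega=\omega^{(1)}$ is the $\KKK$-span of the monomials $T^\mu$ with $\mu\in\UUUUU^{(1)}$, and $\omega^{(-1)}$ is the $\KKK$-span of the monomials $T^\nu$ with $\nu\in\UUUUU^{(-1)}$. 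Hence the product ideal $\omega^{(-1)}\omega$ is the $\KKK$-span of the Laurent monomials $T^{\mu+\nu}$ with $\mu\in\UUUUU^{(1)}$ and $\nu\in\UUUUU^{(-1)}$; since $\mu+\nu\in\UUUUU^{(0)}$ by the fact noted after Definition \ref{def:un}, these indeed lie in $\ekhsg$.

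Because distinct Laurent monomials are linearly independent, $1=T^0$ lies in $\omega^{(-1)}\omega$ if and only if $0=\mu+\nu$ for some $\mu\in\UUUUU^{(1)}$ and $\nu\in\UUUUU^{(-1)}$; equivalently, there exists $\mu\in\ZZZ^{V^-}$ with $\mu\in\UUUUU^{(1)}$ and $-\mu\in\UUUUU^{(-1)}$. I would then unwind both membership conditions using Definition \ref{def:un}. The nonnegativity constraints $\mu(z)\geq 1$ and $(-\mu)(z)\geq -1$ force $\mu(z)=1$ for every $z\in V$. The clique constraints $\mu^+(K)\leq\mu(-\infty)-1$ and $(-\mu)^+(K)\leq(-\mu)(-\infty)+1$ force $\mu^+(K)=\mu(-\infty)-1$ for every maximal clique $K$, and the cycle constraints likewise force $\mu^+(C)=\mu(-\infty)\frac{\#C-1}{2}-1$ for every odd cycle $C$ without chord of length at least $5$. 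Thus the \gor\ property is equivalent to the existence of an integer $d\define\mu(-\infty)$ for which, with $\mu\equiv 1$ on $V$, one has $\#K=d-1$ for every maximal clique $K$ and $\#C=d\frac{\#C-1}{2}-1$ for every such cycle $C$.

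It remains to match these equalities with conditions \ref{item:clique size} and \ref{item:3cond}. The clique equalities can hold for a single $d$ exactly when all maximal cliques share a common size $n$, in which case $d=n+1$; this is condition \ref{item:clique size}. Substituting $d=n+1$ into the cycle equality and writing $\ell\define\#C$ yields $\ell(n-1)=n+3$, i.e.\ $\ell=1+\frac{4}{n-1}$. When $n=1$ the graph has no edge, hence no cycle, so the cycle condition is vacuous; this is case \ref{item:1}. When $n=2$ the equality reads $\ell=5$, so it holds for all admissible cycles precisely when no odd chordless cycle of length at least $7$ exists; this is case \ref{item:2}. When $n\geq 3$ one has $\ell=1+\frac{4}{n-1}\leq 3<5$, so no odd $\ell\geq 5$ satisfies the equation, forcing the absence of any odd chordless cycle of length at least $5$; this is case \ref{item:3}. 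This establishes the asserted equivalence. Finally, if $G$ is h-perfect then $\hstab(G)=\stab(G)$, whence $\ekhsg=\eksg$ and the ``in particular'' clause follows at once.

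The step I expect to be the main obstacle is the combinatorial reading-off in the second paragraph: one must confirm that $\omega^{(-1)}$ is exactly the monomial span indexed by $\UUUUU^{(-1)}$ and that the product ideal $\omega^{(-1)}\omega$ is precisely the monomial span of the sums, so that membership of $1$ translates faithfully into the solvability of $\mu\in\UUUUU^{(1)}$, $-\mu\in\UUUUU^{(-1)}$. Once this translation is secured, pinning $\mu$ down to the all-ones vector on $V$ and solving $\ell(n-1)=n+3$ is a routine finite computation.
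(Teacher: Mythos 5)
Your proposal is correct and takes essentially the same route as the paper: both use Facts \ref{fact:hhs1.1} and \ref{fact:hhs2.1} together with Proposition \ref{prop:symb power} to reduce the \gor\ property to the existence of $\eta\in\UUUUU^{(1)}$ and $\zeta\in\UUUUU^{(-1)}$ with $\eta+\zeta=0$, then force $\eta\equiv 1$ on $V$ and solve the resulting clique and cycle equalities (your $\ell(n-1)=n+3$ is the paper's $n+1=2+\frac{4}{\#C-1}$ rearranged). The only difference is organizational: the paper proves the ``if'' direction by explicitly exhibiting $\eta$, $\zeta$ with $\eta(-\infty)=n+1$, $\zeta(-\infty)=-n-1$ and checking cases (a)--(c), while you run both implications through a single chain of equivalences.
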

\begin{proof}
We denote $\omega_\ekhsg$ by $\omega$.
We first prove the ``if'' part.
Set $\eta(v)=1$, $\zeta(v)=-1$ for $v\in V$,
$\eta(-\infty)=n+1$ and $\zeta(-\infty)=-n-1$.
We will show that $\eta\in\UUUUU^{(1)}$ and $\zeta\in\UUUUU^{(-1)}$.

First, it is clear from the definition that $\eta(v)\geq 1$ and $\zeta(v)\geq -1$ for any
$v\in V$ and
$\eta^+(K)\leq \eta(-\infty)-1$ and $\zeta^+(K)\leq\zeta(-\infty)+1$
for any maximal clique $K$ of $G$.
If \ref{item:1} is satisfied, then $E=\emptyset$ and there is no odd cycle.
Thus, we see that $\eta\in\UUUUU^{(1)}$ and $\zeta\in\UUUUU^{(-1)}$
by the trivial reason.
We also see that  
if \ref{item:3} is satisfied, then
$\eta\in\UUUUU^{(1)}$ and $\zeta\in\UUUUU^{(-1)}$ 
by the trivial reason.
Now suppose that \ref{item:2} is satisfied.
If $C$ is an odd cycle without chord and length at least 5, then the length of $C$
is 5 by the assumption.
Since $\eta(v)=1$ (resp.\ $\zeta(v)=-1$) for any $v\in V$ and $\eta(-\infty)=3$
(resp.\ $\zeta(-\infty)=-3$), we see that
$$
\eta^+(C)=5=3\cdot\frac{5-1}{2}-1=\eta(-\infty)\frac{\#C-1}{2}-1
$$
(resp.\ $\zeta^+(C)=-5=-3\cdot\frac{5-1}{2}+1=\eta(-\infty)\frac{\#C-1}{2}+1$).
Therefore, $\eta\in\UUUUU^{(1)}$ and $\zeta\in\UUUUU^{(-1)}$.

Since $\eta+\zeta=0$, we see by Proposition \ref{prop:symb power} and Fact \ref{fact:hhs1.1}
that
$$
1=T^0\in\omega^{-1}\omega=\trace(\omega).
$$
Thus, we see by Fact \ref{fact:hhs2.1} that $\ekhsg$ is \gor.

Now we prove the ``only if'' part.
By Facts \ref{fact:hhs1.1} and \ref{fact:hhs2.1}, we see that
$$
1\in\omega^{-1}\omega=\omega^{(-1)}\omega.
$$
Therefore, by Proposition \ref{prop:symb power}, we see that there are
$\eta\in\UUUUU^{(1)}$ and $\zeta\in\UUUUU^{(-1)}$ with $\eta+\zeta=0$.
Let $v$ be an arbitrary element of $V$.
Then, since $\eta(v)\geq 1$ and $\zeta(v)\geq -1$ and $(\eta+\zeta)(v)=0$,
we see that $\eta(v)=1$ and $\zeta(v)=-1$.
Let $K$ and $K'$ be arbitrary maximal cliques of $G$.
If $\#K>\#K'$, then
\begin{eqnarray*}
\eta(-\infty)&\geq&\eta^+(K)+1=\#K+1,\\
\zeta(-\infty)&\geq&\zeta^+(K')-1=-(\#K')-1\\
\end{eqnarray*}
and therefore,
$$
(\eta+\zeta)(-\infty)\geq\#K-(\#K')>0.
$$
This contradicts to $\eta+\zeta=0$.
Therefore $\#K\leq \#K'$ and by symmetry, we see that $\#K=\#K'$.
Since $K$ and $K'$ are arbitrary maximal cliques of $G$, we see that
the size of maximal cliques of $G$ are constant.

Let $n$ be this constant size.
Then since 
$$
\eta(-\infty)\geq n+1,
\quad
\zeta(-\infty)\geq -n-1
\quad\mbox{and}\quad
(\eta+\zeta)(-\infty)=0,
$$
we see that
$$
\eta(-\infty)=n+1
\quad\mbox{and}\quad
\zeta(-\infty)=-n-1.
$$

Suppose that there is an odd cycle $C$ without chord and length at least 5.
Then $n\geq 2$,
$$
\eta^+(C)\leq\eta(-\infty)\frac{\#C-1}{2}-1
\quad\mbox{and}\quad
\zeta^+(C)\leq\zeta(-\infty)\frac{\#C-1}{2}+1.
$$
Thus, since $\eta^+(C)=\#C$, $\zeta^+(C)=-(\#C)$,
$\eta(-\infty)=n+1$ and $\zeta(-\infty)=-n-1$,
we see that
$$
\frac{2(\#C+1)}{\#C-1}=\frac{2(\eta^+(C)+1)}{\#C-1}\leq\eta(-\infty)=n+1
$$
and
$$
-\frac{2(\#C+1)}{\#C-1}=\frac{2(\zeta^+(C)-1)}{\#C-1}\leq\zeta(-\infty)=-n-1,
$$
i.e.,
$$
n+1\leq 2+\frac{4}{\#C-1}\leq n+1.
$$
Therefore, $\#C=5$ and $n=2$, since $n$ is an integer.

Thus, we see that if $n\geq 3$, then there is no odd cycle without chord and length at least 5.
Moreover, we see that if $n=2$, then there is no odd cycle without chord and length at least 7.
\end{proof}

Set
$$
\tstab(G)\define
\left\{f\in\RRR^V \left|\ 
\vcenter{\hsize=.5\textwidth\relax\noindent
$0\leq f(x)\leq 1$ for any $x\in V$,
$f^+(e)\leq 1$ for any $e\in E$ and
$f^+(C)\leq\frac{\#C-1}{2}$ for any odd cycle $C$}
\right.\right\}
$$
and
$$
\qstab(G)\define
\left\{f\in\RRR^V \left|
\vcenter{\hsize=.5\textwidth\relax\noindent
$f(x)\geq 0$ for any $x\in V$ and 
$f^+(K)\leq 1$ for any clique $K$ in $G$}
\right.\right\}.
$$
If $\stab(G)=\tstab(G)$, then $G$ is called a t-perfect graph.
It is easily verified that $\hstab(G)\subset\tstab(G)$.
In particular, t-perfect graphs are h-perfect.
Further, by \cite[Theorem 3.1]{chv}, $G$ is perfect if and only if
$\stab(G)=\qstab(G)$.
Since $\hstab(G)\subset\qstab(G)$ by the definition,
perfect graphs are h-perfect.

\begin{cor}
\begin{enumerate}
\item
\mylabel{item:t-perfect}
Suppose that $G$ is t-perfect.
Then $\eksg$ is \gor\ if and only if
\begin{enumerate}
\item
Sizes of maximal cliques are constant (say $n$) and
\item
\begin{enumerate}
\item
$n=1$, 
\item
$n=2$ and there is no odd cycle without chord and length at least 7 or
\item
$n=3$ and there is no odd cycle without chord and length at least 5.
\end{enumerate}
\end{enumerate}
\item
\mylabel{item:perfect}
(Ohsugi-Hibi)
Suppose that $G$ is perfect.
Then $\eksg$ is \gor\ if and only if
sizes of maximal cliques are constant.
\end{enumerate}
\end{cor}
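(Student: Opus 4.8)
The plan is to obtain both parts as specialisations of Theorem~\ref{thm:gor char}. The text has already recorded that t-perfect graphs and perfect graphs are h-perfect, so in either situation $\stab(G)=\hstab(G)$ and hence $\eksg=\ekhsg$; equivalently, one may simply invoke the ``in particular'' clause of Theorem~\ref{thm:gor char}. Thus $\eksg$ is \gor\ if and only if the maximal cliques of $G$ share a common size $n$ and one of \ref{item:1}--\ref{item:3} holds. All that remains is to show that, under each extra hypothesis, this list of conditions collapses to the one stated in the corollary.

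For \ref{item:perfect} I would invoke the classical fact that a perfect graph contains no odd hole, i.e.\ no chordless odd cycle of length at least $5$: any such cycle has clique number $2$ but chromatic number $3$, so it is itself imperfect, and since every induced subgraph of a perfect graph is perfect (a chordless cycle being exactly an induced cycle) $G$ cannot contain one. Consequently the cycle clauses in \ref{item:2} and \ref{item:3} are vacuously true, so whatever value $n$ takes, condition \ref{item:3cond} is automatically satisfied once \ref{item:clique size} holds. The criterion of Theorem~\ref{thm:gor char} therefore reduces to ``the maximal cliques of $G$ have a common size,'' which is precisely the Ohsugi-Hibi statement.

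For \ref{item:t-perfect} the key input is that a t-perfect graph has no clique of size $4$. To see this directly, suppose $K=\{v_1,v_2,v_3,v_4\}$ is a $4$-clique of $G$ and put $f\define\frac13\chi_K$. A short verification gives $f\in\tstab(G)$: every edge contributes at most $\frac23\le 1$, and for an odd cycle $C$ one has $f^+(C)=\frac13\#(C\cap K)$, which is at most $\frac{\#C-1}{2}$ in all cases (the only equality occurring for a triangle inside $K$). However $f^+(K)=\frac43>1$, so $f\notin\qstab(G)\supseteq\stab(G)$, whence $\tstab(G)\neq\stab(G)$, contradicting t-perfection. Thus all cliques of $G$ have size at most $3$; in particular, if the maximal cliques share a common size $n$ then $n\le 3$, so clause \ref{item:3} of Theorem~\ref{thm:gor char}, which requires $n\ge 3$, can be met only with $n=3$. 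Since clauses \ref{item:1} and \ref{item:2} are unaffected, this yields exactly the list in \ref{item:t-perfect}.

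The entire content of the corollary thus sits in the two structural facts---perfect graphs omit odd holes and t-perfect graphs omit $K_4$---after which it is pure bookkeeping against Theorem~\ref{thm:gor char}. I expect the only genuinely delicate point to be the odd-cycle inequality in the $K_4$ argument, where one must check $\frac13\#(C\cap K)\le\frac{\#C-1}{2}$ uniformly over all odd cycles $C$, splitting into the cases $\#(C\cap K)<\#C$ and $\#(C\cap K)=\#C$; everything else is a direct comparison of the theorem's conditions with those stated here.
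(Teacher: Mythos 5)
Your proposal is correct and follows exactly the paper's route: both parts are read off from Theorem \ref{thm:gor char} together with the two structural facts that a t-perfect graph has no clique of size greater than $3$ and a perfect graph has no chordless odd cycle of length at least $5$. The only difference is one of detail, not of approach: the paper cites these two facts without proof, while you verify them (via the point $\frac{1}{3}\chi_K$ for a $4$-clique $K$, and via the hereditary-plus-odd-hole argument), and both verifications are sound.
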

\begin{proof}
\ref{item:t-perfect}:
Since a t-perfect graph is h-perfect and 
has no clique with size more than 3, the result follows from Theorem \ref{thm:gor char}.

\ref{item:perfect}:
Since a perfect graph 
is h-perfect and 
has no odd cycle without chord and length at least 5,
the result follows from Theorem \ref{thm:gor char}.
\end{proof}

\begin{remark}
\rm
Set
$
\msKKK\define
\{K\subset V\mid
K$ is a clique of $G$ and size of $K$ is less than or equal to 3$\}$.
Then
$$
\tstab(G)=
\left\{f\in\RRR^V\left|\ 
\vcenter{\hsize=.5\textwidth\relax\noindent
$f(x)\geq 0$ for any $x\in V$,
$f^+(K)\leq 1$ for any maximal element $K$ of $\msKKK$ and
$f^+(C)\leq\frac{\#C-1}{2}$ for any odd cycle $C$ without chord
and length at least 5}
\right.\right\}.
$$
By defining 
$\tUUUUU^{(n)}\define\{\mu\in\ZZZ^{V^-}\mid
\mu(z)\geq n$ for any $z\in V$,
$\mu^+(K)\leq \mu(-\infty)-n$ for any maximal element $K$ of $\msKKK$ and
$\mu^+(C)\leq\mu(-\infty)\frac{\#C-1}{2}-n$ for any odd cycle $C$ without
chord and length at least $5\}$
for $n\in \ZZZ$,
one can verify that
$T^\mu\in\omega_{\ektsg}^{(n)}$ if and only if 
$\mu\in\tUUUUU^{(n)}$
along the same line as the proof of Proposition \ref{prop:symb power}.
Further, it is verified along the same line as the proof of Theorem \ref{thm:gor char}
that $\ektsg$ is \gor\ if and only if
\begin{enumerate}
\item
Sizes of maximal elements of $\msKKK$ are constant (say $n$) and
\item
\begin{enumerate}
\item
$n=1$, 
\item
$n=2$ and there is no odd cycle without chord and length at least 7 or
\item
$n=3$ and there is no odd cycle without chord and length at least 5,
\end{enumerate}
\end{enumerate}
i.e.,
\begin{enumerate}
\item
$E=\emptyset$,
\item
$G$ has no isolated vertex nor triangle and there is no odd cycle
without chord and length at least 7 or
\item
all maximal cliques of $G$ have size at least 3
and there is no odd cycle without chord and length at least 5.
\end{enumerate}

Similarly, by defining
$\qUUUUU^{(n)}\define\{\mu\in\ZZZ^{V^-}\mid
\mu(z)\geq n$ for any $z\in V$ and
$\mu^+(K)\leq \mu(-\infty)-n$ for any maximal clique of $G\}$
for $n\in \ZZZ$,
one can verify that
$T^\mu\in\omega_{\ekqsg}^{(n)}$ if and only if 
$\mu\in\qUUUUU^{(n)}$
along the same line as the proof of Proposition \ref{prop:symb power}.
Further, it is verified along the same line as the proof of Theorem \ref{thm:gor char}
that $\ekqsg$ is \gor\ if and only if
sizes of maximal cliques are constant.
\end{remark}

A graph $G=(V,E)$ is called a comparability graph if $V$ is a 
partially ordered set (poset for short) and 
$E=\{\{a,b\}\mid a<b$ or $b<a$ in $V\}$.
If $G$ is a comparability graph, then $G$ is perfect and 
$\stab(G)$, $\qstab(G)$ and $\hstab(G)$ coincide with the
chain polytope of $V$ (cf. \cite{sta3}).
We showed that the symbolic powers of the canonical ideal of the
Ehrhart ring of the chain polytope of a poset coincide with the
ordinary powers \cite[Theorem 3.8]{mc}.

Unlike the case of the chain polytope of a poset,
the symbolic power of the canonical ideal 
of the Ehrhart ring of $\hstab(G)$ 
is different from
the ordinary power in general, as the following example shows.

\begin{example}
\rm
Let $X\define\{x_i\mid 0\leq i\leq 6\}$,
$Y\define\{y_i\mid 0\leq i\leq 6\}$,
$Z\define\{z_i\mid 0\leq i\leq 6\}$,
$V\define X\cup Y\cup Z$ and
$$
E\define\{\{x_i,x_{i+1}\}, \{x_i,y_i\}, \{y_i,z_i\}, \{z_i, y_{i+1}\},
\{z_i, z_{i+3}\}\mid 0\leq i\leq 6\},
$$
where indices are considered 
modulo 7,
and set $G\define(V,E)$.
\begin{center}
\begin{tikzpicture}
\coordinate (X0) at (90:4);
\coordinate (X1) at (141:4);
\coordinate (X2) at (193:4);
\coordinate (X3) at (244:4);
\coordinate (X4) at (296:4);
\coordinate (X5) at (347:4);
\coordinate (X6) at (398:4);

\node at (90:4.4) {$x_0$};
\node at (141:4.4) {$x_1$};
\node at (193:4.4) {$x_2$};
\node at (244:4.4) {$x_3$};
\node at (296:4.4) {$x_4$};
\node at (347:4.4) {$x_5$};
\node at (398:4.4) {$x_6$};

\coordinate (Y0) at (90:2.5);
\coordinate (Y1) at (141:2.5);
\coordinate (Y2) at (193:2.5);
\coordinate (Y3) at (244:2.5);
\coordinate (Y4) at (296:2.5);
\coordinate (Y5) at (347:2.5);
\coordinate (Y6) at (398:2.5);

\node at (90:2.1) {$y_0$};
\node at (141:2.1) {$y_1$};
\node at (193:2.1) {$y_2$};
\node at (244:2.1) {$y_3$};
\node at (296:2.1) {$y_4$};
\node at (347:2.1) {$y_5$};
\node at (398:2.1) {$y_6$};

\coordinate (Z0) at (116:2);
\coordinate (Z1) at (167:2);
\coordinate (Z2) at (219:2);
\coordinate (Z3) at (270:2);
\coordinate (Z4) at (321:2);
\coordinate (Z5) at (373:2);
\coordinate (Z6) at (424:2);

\node at (116:2.4) {$z_0$};
\node at (167:2.4) {$z_1$};
\node at (219:2.4) {$z_2$};
\node at (270:2.4) {$z_3$};
\node at (321:2.4) {$z_4$};
\node at (373:2.4) {$z_5$};
\node at (424:2.4) {$z_6$};

\draw (X0)--(X1)--(X2)--(X3)--(X4)--(X5)--(X6)--cycle;
\draw (X0)--(Y0);
\draw (X1)--(Y1);
\draw (X2)--(Y2);
\draw (X3)--(Y3);
\draw (X4)--(Y4);
\draw (X5)--(Y5);
\draw (X6)--(Y6);
\draw (Y0)--(Z0)--(Y1)--(Z1)--(Y2)--(Z2)--(Y3)--(Z3)--(Y4)--(Z4)--(Y5)--(Z5)--(Y6)--(Z6)--cycle;
\draw (Z0)--(Z3)--(Z6)--(Z2)--(Z5)--(Z1)--(Z4)--cycle;

\draw[fill] (X0) circle [radius=0.1];
\draw[fill] (X1) circle [radius=0.1];
\draw[fill] (X2) circle [radius=0.1];
\draw[fill] (X3) circle [radius=0.1];
\draw[fill] (X4) circle [radius=0.1];
\draw[fill] (X5) circle [radius=0.1];
\draw[fill] (X6) circle [radius=0.1];
\draw[fill] (Y0) circle [radius=0.1];
\draw[fill] (Y1) circle [radius=0.1];
\draw[fill] (Y2) circle [radius=0.1];
\draw[fill] (Y3) circle [radius=0.1];
\draw[fill] (Y4) circle [radius=0.1];
\draw[fill] (Y5) circle [radius=0.1];
\draw[fill] (Y6) circle [radius=0.1];
\draw[fill] (Z0) circle [radius=0.1];
\draw[fill] (Z1) circle [radius=0.1];
\draw[fill] (Z2) circle [radius=0.1];
\draw[fill] (Z3) circle [radius=0.1];
\draw[fill] (Z4) circle [radius=0.1];
\draw[fill] (Z5) circle [radius=0.1];
\draw[fill] (Z6) circle [radius=0.1];

\end{tikzpicture}
\end{center}

Note that $Y$ is a stable set, 
$X$ and $Z$ are 7-cycles without chord and no elements of $X$ and $Z$
are adjacent.

We first show that $G$ has no triangle.
Assume the contrary and let $T$ be a triangle in $G$.
Since no elements of $X$ and $Z$ are adjacent, we see that
$T\subset X\cup Y$ or $T\subset Z\cup Y$.
Since $X$ and $Z$ are 7-cycles without chord, $T$ can not be contained 
in $X$ or $Z$.
Further, since $Y$ is a stable set, $T$ can not contain 2 or more elements
of $Y$.
Thus, $T$ contains exactly 1 element of $Y$.
By symmetry, we may assume that $T\cap Y=\{y_0\}$.
If $T\subset X\cup Y$, then $T$ contains 2 elements of $X$ which are 
adjacent to $y_0$.
However, there is only 1 element of $X$ which is adjacent to $y_0$.
Thus, $T\subset Z\cup Y$.
Since elements of $Z$ adjacent to $y_0$ are $z_0$ and $z_6$,
we see that $T=\{y_0, z_0, z_6\}$.
This is a contradiction since $z_0$ and $z_6$ are not adjacent.

Next we show that for any 5-cycle $C$ in $G$,
$C\cap X\neq\emptyset$ and $C\cap Z\neq\emptyset$.
In particular, $\#(C\cap Y)\geq 2$, since no elements of $X$ and
$Z$ are adjacent.
Assume the contrary.
Then $C\subset X\cup Y$ or $C\subset Z\cup Y$.
Since $X$ and $Z$ are 7-cycles without chord, we see that 
$C\not\subset X$ and $C\not \subset Z$.
Thus, $C\cap Y\neq \emptyset$.
By symmetry, we may assume that $C\ni y_0$.
Since $y_0$ is adjacent to only 1 element $x_0$ in $X\cup Y$,
$C\subset X\cup Y$ is impossible.
Thus, $C\subset Z\cup Y$.
$y_0$ is adjacent to exactly 2 elements $z_0$ and $z_6$ in $Z\cup Y$.
Further, $z_0$ is adjacent to $z_3$, $z_4$ and $y_1$ and
$z_6$ is adjacent to $z_2$, $z_3$ and $y_6$ in $Z\cup Y$
except $y_0$.
Since no elements of $\{z_3, z_4, y_1\}$ and $\{z_2, z_3, y_6\}$ are
adjacent, we see that this case is also impossible.

Define $\mu\in\ZZZ^{V^-}$ by $\mu(x_i)=\mu(z_i)=4$, $\mu(y_i)=3$ for $0\leq i\leq 6$ and
$\mu(-\infty)=10$.
Since $\max_{w\in V}\mu(w)=4$, we see that 
$\mu^+(e)\leq 8=\mu(-\infty)-2$ for any $e\in E$.
Let $C$ be an arbitrary 5-cycle.
Since $\#(C\cap Y)\geq 2$, we see that
$$
\mu^+(C)\leq 18=\mu(-\infty)\frac{5-1}{2}-2.
$$
Let $C$ be an arbitrary odd cycle with length at least 7.
Then, since $\max_{w\in V}\mu(w)=4$,
we see that 
$$
\mu^+(C)\leq 4(\#C)\leq 5(\#C)-7=\mu(-\infty)\frac{\#C-1}{2}-2.
$$
Thus, we see that $\mu\in\UUUUU^{(2)}$ and therefore $T^\mu\in\omega_{\ekhsg}^{(2)}$
by Proposition \ref{prop:symb power}.

We deduce a contradiction by assuming that there are $\mu_1$, $\mu_2\in\UUUUU^{(1)}$ with
$\mu=\mu_1+\mu_2$.
We may assume that $\mu_1(-\infty)\leq\mu_2(-\infty)$.

Since $X$ is a 7-cycle without chord, we see that
$$
\mu_i^+(X)\leq\mu_i(-\infty)\frac{7-1}{2}-1=3\mu_i(-\infty)-1
$$
for $i=1$, $2$.
Further, since
$$
\mu_1^+(X)+\mu_2^+(X)=\mu^+(X),\qquad
\mu_1(-\infty)+\mu_2(-\infty)=\mu(-\infty)
$$
and
$$
\mu^+(X)=28=3\cdot 10-2=3\mu(-\infty)-2,
$$
we see that
$$
\mu_i^+(X)=3\mu_i(-\infty)-1
$$
for $i=1$, $2$.
We see that
$$
\mu_i^+(Z)=3\mu_i(-\infty)-1
$$
for $i=1$, $2$ by the same way.

Suppose that $\mu_1(-\infty)\leq 2$.
Then $\mu_1^+(e)\geq 2\geq \mu_1(-\infty)$ for any $e\in E$,
which contradicts to the clique inequality $\mu_1^+(e)\leq \mu_1(-\infty)-1$.
Thus $\mu_1(-\infty)\geq 3$.

Suppose that $\mu_1(-\infty)=3$.
Then since $2\leq \mu_1^+(e)\leq\mu_1(-\infty)-1$ for any $e\in E$ and
$\mu_1(v)\geq 1$ for any $v\in V$, we see that $\mu_1(v)=1$ for any $v\in V$.
Thus,
$$
\mu_1^+(X)=7<8=3\mu_1(-\infty)-1.
$$
This contradicts to the fact shown above.

Next assume that $\mu_1(-\infty)=4$.
Then for any $e\in E$, $\mu_1^+(e)\leq \mu_1(-\infty)-1=3$ and therefore
$2\mu_1^+(X)=\sum_{i=0}^6(\mu_1(x_i)+\mu_1(x_{i+1}))\leq 21$.
Thus, $\mu_1^+(X)\leq 10$, since $\mu_1^+(X)$ is an integer and
$$
\mu_1^+(X)\leq 10<11=3\mu_1(-\infty)-1.
$$
This is a contradiction again.

Finally, suppose that $\mu_1(-\infty)=5$.
Then for any $e\in E$, $\mu_1^+(e)\leq 4=\mu_1(-\infty)-1$.
Since
$$
2\mu_1^+(X)=\sum_{j=0}^6(\mu_1(x_j)+\mu_1(x_{j+1}))\leq 7\cdot 4=28
$$
and
$$
\mu_1^+(X)=3\mu_1(-\infty)-1=14,
$$
we see that $\mu_1(x_j)+\mu_1(x_{j+1})=4$ for $0\leq j\leq 6$.
By solving system of these linear equations, we see that 
$\mu_1(x_j)=2$ for $0\leq j\leq 6$.
We also see that $\mu_1(z_j)=2$ for $0\leq j\leq 6$ by
the same way.

Consider the 5-cycle $C_j=x_jy_jz_jy_{j+1}x_{j+1}x_j$ for $0\leq j\leq 6$.
Since 
$$
\mu^+(C_j)=18=\mu(-\infty)\frac{\#C_j-1}{2}-2,
$$
we see by the same argument as above that
$\mu_i^+(C_j)=9$ for $i=1$, $2$ and $0\leq j\leq 6$.
Since $\mu_1(x_j)=\mu_1(x_{j+1})=\mu_1(z_j)=2$,
$\mu_1(y_k)\geq 1$ for $k=j$, $j+1$,
we see that
$$\mu_1(y_j)=1\quad\mbox{and}\quad \mu_1(y_{j+1})=2
$$
or
$$\mu_1(y_j)=2\quad\mbox{and}\quad \mu_1(y_{j+1})=1.
$$
Suppose that $\mu_1(y_0)=1$.
Then $\mu_1(y_1)=2$, $\mu_1(y_2)=1$, \ldots, $\mu_1(y_6)=1$ and $\mu_1(y_0)=2$.
This is a contradiction.
We also deduce a contradiction if $\mu_1(y_0)=2$.

Thus, we see that there are no $\mu_1$, $\mu_2\in\UUUUU^{(1)}$ with
$\mu=\mu_1+\mu_2$.
Therefore $T^\mu\not\in\omega_{\ekhsg}^2$ and we see that
$\omega_{\ekhsg}^2\subsetneq\omega_{\ekhsg}^{(2)}$.

The graph $G$ in this example is not h-perfect.
In fact, let $\nu$ be an element of $\ZZZ^V$ with
$\nu(x_i)=\nu(z_i)=3/7$ and $\nu(y_i)=5/14$ for $0\leq i\leq 6$.

We first show that $\nu\in\hstab(G)$.
It is clear that $\nu(v)\geq 0$ for any $v\in V$.
Since $\max_{v\in V}\nu(v)=3/7$, we see that $\nu^+(e)\leq 1$ 
for any $e\in E$.
Since $G$ has no triangle, we see that $\nu^+(K)\leq 1$ for any clique
$K$ in $G$.
Let $C$ be an arbitrary odd cycle without chord and length at least 5.
If the length of $C$ is greater than or equals to 7, then
$$
\nu^+(C)\leq \frac{3}{7}(\#C)=\frac{1}{2}(\#C)-\frac{1}{14}(\#C)
\leq\frac{1}{2}(\#C)-\frac{1}{14}\cdot 7
=\frac{\#C-1}{2}.
$$
If $\#C=5$, then $C$ is one of $C_i$ above.
Therefore,
$$
\nu^+(C)=\nu^+(C_i)=3\cdot\frac{3}{7}+2\cdot\frac{5}{14}=2=\frac{\#C-1}{2}.
$$
Thus, we see that $\nu\in\hstab(G)$.

Next we show that $\nu\not\in\stab(G)$.
Assume the contrary.
Then there are stable sets $S_1$, \ldots, $S_m$ of $G$ and
positive real numbers $\lambda_1$, \ldots, $\lambda_m$ such that
$\lambda_1+\cdots+\lambda_m=1$ and
$$
\nu=\sum_{j=1}^m\lambda_j\chi_{S_j}.
$$
Since $X$ is a 7-cycle, we see that
$
\chi^+_{S_j}(X)=\#(S_j\cap X)\leq3
$
for any $0\leq j\leq m$.
On the other hand, since
$$
3=\nu^+(X)=\sum_{j=1}^m\lambda_j\chi^+_{S_j}(X)
\qquad
\mbox{and}
\qquad
\lambda_1+\cdots+\lambda_m=1,
$$
we see that
$\chi^+_{S_j}(X)=3$
for any $0\leq j\leq m$.
We also see that
$\chi^+_{S_j}(Z)=3$ and $\chi^+_{S_j}(C_i)=2$ for
$0\leq i\leq 6$ and for $0\leq j\leq m$ by the same way.

Set $S\define S_1$.
Since $\#(S\cap X)=\chi^+_S(X)=3$ and $S$ is a stable set, we 
may assume, by symmetry, that $S\cap X=\{x_0, x_2, x_4\}$.
Consider $S\cap C_1$.
Since $x_2\in S$ and $S$ is a stable set,
we see that $y_2\not\in S$.
Thus,
$S\cap C_1=\{x_2, y_1\}$ or $S\cap C_1=\{x_2, z_1\}$.
We see that $S\cap C_2=\{x_2,y_3\}$ or $S\cap C_2=\{x_2, z_2\}$
by the same way.

First consider the case where $S\cap C_1=\{x_2, y_1\}$ and $S\cap C_2=\{x_2,y_3\}$.
Then $S\cap C_0=\{x_0, y_1\}$ and $S\cap C_3=\{x_4, y_3\}$ since
$\#(S\cap C_0)=\#(S\cap C_3)=2$.
Thus, $z_0$, $z_1$, $z_2$, $z_3\not\in S$.
Since $\#(S\cap Z)=3$, we see that $S\cap Z=\{z_4, z_5, z_6\}$.
Since $x_0$, $x_4\in S$ and $\#(S\cap C_4)=\#(S\cap C_6)=2$, we see that
$S\cap C_4=\{x_4, z_4\}$ and $S\cap C_6=\{x_0, z_6\}$.
Thus, we see that $x_5$, $y_5$, $x_6$, $y_6\not\in S$
and therefore, $\#(S\cap C_5)=1$.
This contradicts to the fact that $\#(S\cap C_5)=2$.

Next consider the case where $S\cap C_1=\{x_2, y_1\}$ and $S\cap C_2=\{x_2, z_2\}$.
Since $Z$ is a 7-cycle $z_0z_3z_6z_2z_5z_1z_4z_0$, $z_1\not\in S$, $z_2\in S$,
$\#(S\cap Z)=3$ and $S$ is a stable set, we see that $S\cap Z=\{z_2, z_3, z_4\}$.
Since $x_4$, $z_4\in S\cap C_4$ and $\#(S\cap C_4)=2$,
we see that $S\cap C_4=\{x_4, z_4\}$.
Therefore, $y_5\not\in S$.
Further, $x_5$, $x_6\not\in S$ by our assumption and $z_5\not\in S$ by the
fact shown above.
Therefore, $S\cap C_5\subset\{y_6\}$,
which contradicts to $\#(S\cap C_5)=2$.

The case where $S\cap C_1=\{x_2, z_1\}$ and $S\cap C_2=\{x_2, y_3\}$
can not occur by the same reason.

Finally, assume that $S\cap C_1=\{x_2, z_1\}$ and $S\cap C_2=\{x_2, z_2\}$.
Then $x_1$, $y_1\not\in S$ and since $y_0$ is adjacent to $x_0\in S$,
we see that $y_0\not\in S$.
Therefore, $S\cap C_0=\{x_0,z_0\}$ since $\#(S\cap C_0)=2$.
We see that $S\cap C_3=\{x_4, z_3\}$ by the same way.
This contradicts to the fact that $S$ is a stable set since $z_0$ and $z_3$
are adjacent.

Thus, we see that $\nu\not\in\stab(G)$ and therefore,
$\stab(G)\subsetneq\hstab(G)$.
\end{example}



%


\begin{thebibliography}{HHS}
%
%
%
%
%
%
\bibitem[BH]{bh}
Bruns, W. and Herzog, J.:
\cm\ rings.
Cambridge university press,
(No. 39), (1998).


\bibitem[Chv]{chv}Chv\'atal, V.:
{\it On certain polytopes associated with graphs.}
 J. Combin. Theory Ser. 
B, {\bf18(2)},  (1975) 138--154.


\bibitem[Fos]{fos}
Fossum, R. M.:
The divisor class group of a Krull domain.
Springer, 
Ergebnisse der Mathematik und ihrer Grenzgebiete,
 (Band 74), (1973).


\bibitem[HHS]{hhs}
Herzog, J., Hibi, T. and Stamate, D. I.:
{\it The trace of the canonical module.}
Isr. J. Math., {\bf233}  (2019) 133--165.  
https://doi.org/10.1007/s11856-019-1898-y




\bibitem[HT]{ht}
Hibi, T. and Tsuchiya, A.:
{\it Odd Cycles and Hilbert Functions of Their Toric Rings.}
Mathematics, {\bf8(1)}, (2020) 22.  


\bibitem[Hoc]{hoc}
Hochster, M.:
{\it Rings of invariants of tori, Cohen-Macaulay rings generated
by monomials and polytopes.}
Ann. of Math., {\bf 96}, (1972) 318-337.  
%


%
%
\bibitem[Mah]{mah}
Mahjoub, A.R.:
{\it
 On the stable set polytope of a series-parallel graph.}
 Math. Programm. 40, (1988)  53--57.



\bibitem[Miy]{mc}
Miyazaki, M.:
{\it
On the canonical ideal of the Ehrhart ring of the chain polytope of a poset.}
Journal of Algebra
{\bf541}, (Jan.\ 2020)  1--34.


\bibitem[OH]{oh}
Ohsugi, H. and Hibi, T.:
{\it Special simplices and Gorenstein toric rings.}
 J. Combin. Theory Ser. A,  {\bf113},   (2006) 718--725.


\bibitem[SU]{su}
Sbihi, N. and Uhry, J.-P.:
{\it A class of h-perfect graphs.}
Discrete Math.,  {\bf51}    (1984) 191--205.
%
\bibitem[Sta1]{sta2}
Stanley, R. P.:
{\it Hilbert Functions of Graded Algebras.}
Adv. Math., {\bf 28}  (1978) 57--83.
%
\bibitem[Sta2]{sta3}
Stanley, R. P.:
{\it Two poset polytopes.} Discrete \& Computational Geometry 1.1 (1986) 9--23.
%
%
%
%
%
\end{thebibliography}
\end{document}